\def\bbbone{{\mathchoice {\rm 1\mskip-4mu l} {\rm 1\mskip-4mu l}
{\rm 1\mskip-4.5mu l} {\rm 1\mskip-5mu l}}}
\def\bbbe{\mathbb{E}}
\def\bbbp{\mathbb{P}}
\def\ignore#1{}
\def\tC{\tilde{C}}
\def\tm{\tilde{m}}
\def\ts{\tilde{s}}
\def\tK{\tilde{K}}
\def\tZ{\tilde{Z}}
\def\tN{\tilde{N}}
\def\tlam{\tilde{\lambda}}
\def\BP{\mathbb{P}}
\def\BE{\mathbb{E}}
\def\BN{\mathbb{N}}
\def\no{\noindent}
\def\eq{\begin{equation}}
\def\en{\end{equation}}
\def\bZ{\mbox{\bf{Z}}}
\def\bC{\mbox{\bf{C}}}
\newtheorem{lemma}{Lemma}
\newtheorem{remark}{Remark}
\title{A note on the Screaming Toes game} 
\author[1]{Simon Tavar\'e}
\affil[1]{Department of Statistics, Columbia University, 1255 Amsterdam Avenue, New York, NY 10027, USA} 
\begin{document}
\maketitle

\begin{abstract}
We investigate properties of random mappings whose core is composed of derangements as opposed to permutations. Such mappings arise as the natural framework to study the Screaming Toes game described, for example, by Peter Cameron. This mapping differs from the classical case primarily in the behaviour of the small components, and a number of explicit results are provided to illustrate these differences. 
\end{abstract}

\noindent {\bf Keywords: } Random  mappings, derangements,  Poisson approximation, Poisson-Dirichlet distribution, probabilistic combinatorics, simulation, component sizes, cycle sizes 

\noindent {\bf MSC:} 60C05,60J10,65C05, 65C40 

\section{Introduction}  
The following problem comes from Peter Cameron's book, \cite[p. 154]{cameron_notes_2017}.
\begin{quotation}
\noindent \emph{$n$ people stand in a circle. Each player looks down at someone else's feet (i.e., not at their own feet). At a given signal, everyone looks up from the feet to the eyes of the person they were looking at. If two people make eye contact, they scream. What is the probability $q_n$, say, of at least one pair screaming?}
\end{quotation}
The purpose of this note is to put this problem in its natural probabilistic setting, namely that of a random mapping whose core is a derangement, for which many properties can be calculated simply. We focus primarily on the small components, but comment on other limiting regimes in the discussion.

We begin by describing the usual model for a random mapping. Let $B_1,B_2,\ldots,B_n$ be independent and identically distributed random variables satisfying
\begin{equation}\label{Bdef}
\mathbb{P}(B_i = j) = 1/n, \quad j \in [n],
\end{equation}
where $[n] = \{1,2,\ldots,n\}$. The mapping $f: [n] \to [n]$ is given by $f(i) = B_i$.
Components of the mapping are formed by iteration: $i$ and $j$ are in the same component if some iterate of $i$ equals some iterate of $j$; each component is a directed cycle of rooted, labeled trees. An example with $n = 20$, displayed in  Fig.~\ref{fig1}, is given by \\

\hskip -0.25in
\begin{tabular}{c|cccccccccccccccccccc}
$i$ & 1 & 2 & 3 & 4 & 5 & 6 & 7 & 8 & 9 & 10 & 11 & 12 & 13 & 14 & 15 & 16 & 17 & 18 & 19 & 20 \\
$B_i$ & 2 & 14 & 7 & 1 & 7 & 19 & 17 & 11 & 10 & 13 & 2 & 14 & 9 & 8 & 19 & 10 & 6 & 16 & 6 & 19\\
\end{tabular}

\begin{figure}[htbp]
\centering
\includegraphics[width=0.6\textwidth]{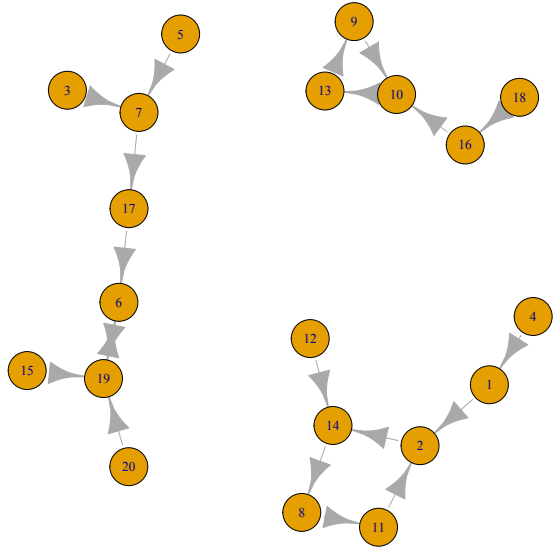}
\vskip -0.9in
\caption{A mapping graph on $n = 20$ vertices with no singleton cycles. This one has 3 components, of sizes 5, 7 and 8. There are 9 elements in cycles, which have length 3, 4 and 2 respectively. Figure produced by the {\sf R} igraph package~\cite{csardi2006}. }\label{fig1}
\end{figure}

\subsection{The components of a random mapping}
Denoting the number of components of size $j$ by $C_j(n), j=1,2, \ldots, n$, Harris~\cite{bh60} showed that the  probability that a random mapping  has $a_j$
components of size $j$ is 
\begin{equation}\label{hsf}
\BP(C_j(n) = a_j, j=1,\ldots,n) = \bbbone\left\{\sum_{j=1}^n j a_j = n\right\}\, \frac{n!e^n}{n^n} \prod_{j=1}^n 
\frac{\lambda_j^{a_j}}{a_j!},
\end{equation}
where  
\begin{equation}\label{lambdaj}
\lambda_j = \frac{e^{-j}}{j} \sum_{i=0}^{j-1}\frac{j^i}{i!} = \frac{1}{j}\,\bbbp(\textrm{Po}(j) < j),
\end{equation}
 $\textrm{Po}(\mu)$ denoting a Poisson random variable with mean $\mu$. In particular, the probability that a mapping of size $n$ has a single component is
 $$
 s_n := \bbbp(C_n(n) = 1) = \frac{n! e^n}{n^n} \, \lambda_n.
$$
 It follows readily from (\ref{hsf}) that
\begin{eqnarray}\label{hsfmeans}
\BE C_j(n) & = & \frac{n!}{n^n} \frac{(n-j)^{n-j}}{(n-j)!}
e^j \lambda_j, \nonumber \\
& = & s_j \,\binom{n}{j} \left(\frac{j}{n}\right)^j \left( 1 - \frac{j}{n}\right)^{n-j}\quad j=1,2,\ldots,n.
\end{eqnarray}
A probabilistic interpretation of (\ref{hsfmeans}) is given in \cite{dep91}. Kolchin~\cite{kolchin1976} established that 
$(C_1(n),C_2(n),\ldots) \Rightarrow (Z_1,Z_2,\ldots),$ where the $Z_i$ are independent Poisson random variables with means $\BE Z_i = \lambda_i.$
Many other properties of random mappings may be found, for example, in ~\cite{dep91}.

\subsection{The core of a random mapping}
Here we record some properties of the core of a random mapping, the set of elements that are in cycles.  Results (\ref{corenumber}) through (\ref{core1}) are classical; see, for
example,~\cite[p. 366]{bb85}. The number $N_n $ of elements in the core has  distribution 
given by  
\eq\label{corenumber} 
\BP(N_n = r) = \frac{r}{n}\prod_{l=0}^{r-1} 
\left(1 - \frac{l}{n}\right) = \frac{r}{n} \frac{n_{[r]}}{n^r}, \quad r=1,\ldots,n, 
\en 
where $n_{[j]} = n(n-1)\cdots (n-j+1).$ The mean of $N_n$ is
$$
\bbbe N_n   = \sum_{l=0}^{n-1} \frac{(n-1)_{[l]}}{n^l},
$$
and it follows directly from (\ref{corenumber}) that $N_n/\sqrt{n}$ converges 
in distribution to a random variable with density function 
$x e^{-x^2/2}, x >0$. We write $C_j^*(n)$ for the number of 
cycles of size $j$ in the core of a random mapping, and let $C'_j(r)$ 
be the number of cycles of size $j$ in a  {\em uniform} random permutation 
of $r$ objects. The joint law of the ${\cal L}(C_j^*(n))$ is given by 
$$
{\cal L}(C_j^*(n)) = \sum_{r=1}^n \BP(N_n = r) {\cal L} (C'_j(r)), 
$$
since, conditional on $N_n = r$ the random mapping restricted to its
core is a uniformly distributed permutation on those $r$ elements.
It follows that 
\eq\label{blurb}
\BE C_j^*(n) = \frac{1}{j} \frac{n_{[j]}}{n^j}, \quad j=1,\ldots,n. 
\en
For fixed $j$, $\BE C_j^*(n) \to 1/j$, and 
\eq\label{core1}
(C_1^*(n),C_2^*(n),\ldots) \Rightarrow (Z_1^*,Z_2^*,\ldots)
\en
where $Z_j^*$ are independent Poisson random variables with mean $\BE Z^*_j =
1/j$. 

\section{The Screaming Toes random mapping}

We return now to Cameron's setting. Label the players $1, 2, \ldots, n$, and form components by iteration: $i$ and $j$ are in the same component if some iterate of $i$ equals some iterate of $j$; components now describe how the players are looking at each other. The cycles in the core of the mapping indicate sets of players, say $i_1, \ldots,i_r$ for which $i_1 \to i_2 \to \cdots \to i_r \to i_1$ (where $\to$ denotes `looks at the feet of'), and the trees attached to any of the cyclic elements describe the sets of players who look from one to another, and finally to someone in a cycle. Each component has a single cycle, so that the number of components is the number of cycles in the core, and each cycle must have length at least two.
Fig.~\ref{fig1} provides an illustration.

We study the cycles in the core and the structure of the components of the mapping for the screaming toes game. We find the probability that there are $k$ screaming pairs (for $k = 1, 2, \ldots, \lfloor n/2 \rfloor)$ (see (\ref{c2n*law})), identify the structure of the random mapping itself, and derive some basic properties of the component sizes.  In the setting of (\ref{Bdef}), the $B_i$ are independent, but no longer identically distributed; we let
\begin{equation}\label{Bdef1}
\mathbb{P}(B_i = j) = 1/(n-1), \quad j \in [n]\setminus \{i\}.
\end{equation}

The $B_i$ may be used simulate the modified mapping, which can be decomposed into components by iteration (as before), each of which is a directed cycle of rooted, labeled trees. In this case, though, the core of the mapping can have no cycles of length 1 and there are $p(n) = (n-1)^n$ such mappings. Note that the resulting combinatorial structure is not the same as a random mapping conditioned on having no singleton components, because such a conditioned structure may still have  singleton cycles in its core. We define $\tC_j(n)$ as the number of components of size $j$, and $\tC_j^*(n)$  the number of cycles of length $j$ in the core. 

\subsection{The distribution of the component sizes}

We adopt the general approach from~\cite[Chapter 2]{arratia_logarithmic_2003}. A component of size $i \geq 2$ has $j = 2, 3, \ldots, i$ elements in its core. A modification of the counting argument that leads to (\ref{lambdaj}) then shows that the number of components of size $i$ is given by
$$
\tm_i := \sum_{j=2}^i \genfrac(){0pt}{}{i}{j}\, (j-1)! \, j i^{i-j-1} = (i-1)! \sum_{j = 2}^i \frac{i^{i-j}}{(i-j)!}
$$
It follows that for $i \geq 2$,
$$
\tm_i = (i-1)! \, e^i \, \sum_{l=0}^{i-2} \frac{e^{-i} i^l}{l!} = (i-1)! \, e^i \, \mathbb{P}({\rm Po}(i) < i-1).
$$
The joint law of $(\tC_2(n),\ldots,\tC_n(n))$ is therefore given by
\eq\label{jointlaw}
\BP(\tC_j(n) = a_j, j=2,\ldots,n) = \bbbone\left\{\sum_{j=2}^n j a_j = n\right\}\, 
\frac{x^{-n} n!}{(n-1)^n}\,\prod_{j=2}^n \left(\frac{\tm_j x^j}{j!}\right)^{a_j} \frac{1}{a_j!},
\en
for any $x > 0$. Since the distribution in (\ref{jointlaw}) is independent of $x$, we are free to choose it and we make the choice $x = e^{-1}$, which results in the structure being \emph{logarithmic} in the terminology of \cite[p. 51]{arratia_logarithmic_2003}. We therefore define
\eq\label{lambdajhat}
\tlam_j = \frac{\tm_j e^{-j}}{j!} = \frac{1}{j}\,\bbbp(\textrm{Po}(j) < j-1), \quad j=2,3,\ldots
\en
which should be compared to that in (\ref{lambdaj}). The probability that a mapping of size $n$ has a single component is
\eq\label{singledeftoes}
\ts_n = \bbbp(\tC_n(n) = 1) = \frac{e^n n!}{(n-1)^n}\,\tlam_n,
\en
and
$$
\ts_n \sim e \sqrt\frac{\pi}{2}\,n^{-1/2} \approx 3.4069\, n^{-1/2}, \quad n \to \infty.
$$
\subsubsection{Moments}

The falling factorial moments of the component counts are readily calculated from (\ref{jointlaw}), to obtain
\eq\label{jointmeans}
\mathbb{E}(\tC_2^{[r_2]} \cdots \tC_b^{[r_b]}) = \tlam_2^{r_2} \cdots \tlam_b^{r_b} \, 
e^m n_{[m]} \frac{(n-m-1)^{n-m}}{(n-1)^n},
\en
for $r_2, \ldots,r_b \geq 0$ satisfying $m = 2r_2 + \cdots + $ $b r_b \leq n$. It follows that, for $ j = 2,3,\ldots,n$,
\begin{eqnarray}
\mathbb{E} \tC_j(n) &  = & \tlam_j\, e^j\, n_{[j]} \frac{(n-j-1)^{n-j}}{(n-1)^n} \nonumber \\
& = & \ts_j \binom{n}{j} \left(\frac{j-1}{n-1}\right)^j\, \left( 1 - \frac{j}{n-1}\right)^{n-j}, \label{cjmeans}
\end{eqnarray}
which also admits a simple probabilistic justification. A numerical example is given in Table~\ref{tab2}. The covariances may be found from (\ref{jointmeans}): for $i+j \leq n$,
\eq\label{ecicj}
\bbbe \tC_i(n) \tC_j(n) = \ts_i \ts_j \binom{n}{i,j} \left(\frac{i-1}{n-1}\right)^i \left(\frac{j-1}{n-1}\right)^j \left(1 - \frac{i+j}{n-1}\right)^{n-i-j},
\en
the value being 0 when $i+j > n$. The expected value of the number of components $\tK_n = \tC_2(n) + \cdots + \tC_n(n)$ is
\eq\label{ekn}
 \bbbe \tK_n = \sum_{j=2}^n \tlam_j\, e^j \, n_{[j]} \frac{(n-j-1)^{n-j}}{(n-1)^n}.
\en

\subsubsection{Limit distributions}

Following \cite[p.48]{arratia_logarithmic_2003}, the joint law of an assembly such as the screaming toes mapping $(\tC_2(n),\ldots,\tC_n(n))$ may also be represented as that of $(\tZ_2,\ldots,\tZ_n)$ conditional on
\eq\label{t1ndef}
T_{1n} := 2\tZ_2 + 3 \tZ_3 + \cdots + n\tZ_n = n,
\en
where the $\tZ_i$ are independent Poisson random variables with $\bbbe \tZ_j = \tlam_j, j \geq 2,$ given in (\ref{lambdajhat}) and it follows from~\cite[Chapter 3]{arratia_logarithmic_2003}, or directly from (\ref{jointmeans}), that the counts of small components have, asymptotically,  independent Poisson distributions with means $\bbbe \tZ_j$ given above.

\section{The core of the Screaming Toes mapping}
The core of our mapping is composed of derangements, permutations with no fixed points. We use $^*$ to denote derangements, so that $C_j^*(n)$ is the number of cycles of length $j$ in a random uniform derangement of size $n$. We write
$$
D_n := n! \sum_{j=0}^n \frac{(-1)^j}{j!}
$$
to denote the number of derangements of $n$ objects; the probability that a random permutation is a derangement is $D_n / n!$.

\ignore{
\begin{remark}
By counting permutations by their number of fixed points, we recover the fact that
$$
n! = \sum_{i=0}^n {n \choose i} D_{n-i}, \quad n=1,2,3,\ldots
$$
\end{remark}
\note{Reminder: it is clear that 
$$
\sum_{n \geq 0} \frac{D_n}{n!} x^n = \frac{e^{-x}}{1 - x}.
$$}
}

We record two results for future use:
\eq\label{ecjn*}
\BE C_j^*(n) = \frac{1}{j} \frac{n!}{D_n} \frac{D_{n-j}}{(n-j)!}, j = 2,3,\ldots,n.
\en
and, for a random permutation,
\eq\label{prob0k}
\bbbp(C_1(n) = 0, C_2(n) = k) 
 =  \left(\frac{1}{2}\right)^k \frac{1}{k!} \sum_{l = 0}^{\lfloor n/2 \rfloor - k} (-1)^{l} 
    \left(\frac{1}{2}\right)^{l}\frac{1}{l!} \frac{D_{n - 2 l -2k}}{(n - 2 l-2k)!} 
\en
\ignore{
When $k = 0$, this reduces to
\eq\label{prob00}
\mathbb{P}(C_1(n) = 0, C_2(n) = 0) =  \sum_{l = 0}^{\lfloor n/2 \rfloor} (-1)^{l} 
    \left(\frac{1}{2}\right)^{l}\frac{1}{l!}\, \frac{D_{n - 2 l}}{(n - 2 l)!} 
\en
}
(\ref{prob0k}) follows from \cite[Eq. (1.9)]{arratia_logarithmic_2003}, and the well-known (\ref{ecjn*}) is derived in the context of $\theta$-biased random derangements (with $\theta = 1$) in \cite{dasilva2020}. 

\subsection{The number of 2-cycles in the core}
In this section, we look in more detail at the cycles in the core of the mapping.
We begin with  some properties of the number $N_n$ of elements in the core of a standard random mapping. If we define 
$\pi_k = \prod_{l=0}^{k} \left( 1-\frac{l}{n}\right) = (n-1)_{[k]}/n^k$
then $n \pi_k = (n-k) \pi_{k-1}$ for $k = 1, 2, \ldots, n-1.$ Hence 
$\pi_{k-1} - \pi_k = k\pi_{k-1}/n$, and it follows from (\ref{corenumber}) that, for $j = 1, 2, \ldots, n,$ 
\begin{equation}\label{identity}
\mathbb{P}(N_n \geq j) = \sum_{k=j}^n \frac{k \pi_{k-1}}{n} = \sum_{k=j}^n (\pi_{k-1} - \pi_k) = \pi_{j-1} - \pi_n = \pi_{j-1}.
\end{equation}
\ignore{
As a consequence, we recover the well-known fact that 
$$
\bbbe N_n  =  \sum_{l=1}^{n} \BP(N_n \geq l) =  \sum_{l=1}^{n} \pi_{l-1} =
\sum_{l=0}^{n-1} \prod_{j=0}^l \left(1 -\frac{j}{n}\right) = \sum_{l=0}^{n-1} \frac{(n-1)_{[l]}}{n^l}.
$$
}

To find the distribution of the number $\tC_2^*(n)$ of 2-cycles in the core, we make use of the following result.
\begin{lemma}\label{lem1}
For any $n \geq 2$ and $m = 1,2,\ldots,n$ we have
\eq\label{needtoshow}
\left(\frac{n}{n-1}\right)^n \,\sum_{r = m}^n \frac{r}{n} \frac{n_{[r]}}{n^r} \frac{D_{r - m}}{(r - m)!} = \frac{n_{[m]}}{ (n-1)^{m}} 
\en
\end{lemma}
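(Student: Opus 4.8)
The plan is to strip away the probabilistic packaging and reduce (\ref{needtoshow}) to a clean polynomial identity in the derangement numbers. First I would clear the prefactor and multiply through by $n^n$. Recognising $\frac{r}{n}\frac{n_{[r]}}{n^r} = \BP(N_n = r)$ via (\ref{corenumber}), and noting that $n^n\,\BP(N_n=r) = r\,n_{[r]}\,n^{\,n-r-1}$ is exactly the number of mappings of $[n]$ whose core has size $r$, the claim becomes
\[
\sum_{r=m}^n r\,n_{[r]}\,n^{\,n-r-1}\,\frac{D_{r-m}}{(r-m)!} = (n-1)^{\,n-m}\, n_{[m]}.
\]
Factoring $n_{[r]} = n_{[m]}\,(n-m)_{[r-m]}$, substituting $s = r-m$, and using $(n-m)_{[s]}/s! = \binom{n-m}{s}$, the factor $n_{[m]}$ cancels, and writing $K = n-m$ (so $n = K+m$) everything collapses to the polynomial identity
\begin{equation*}
\sum_{s=0}^{K}(s+m)\binom{K}{s}D_s\,n^{\,K-s} = n\,(n-1)^K. \tag{$\ast$}
\end{equation*}

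The engine for $(\ast)$ is the derangement exponential generating function $\sum_{s\ge 0} D_s u^s/s! = e^{-u}/(1-u)$. Multiplying by $e^{tu}$ and reading off the coefficient of $u^K$ in $e^{(t-1)u}/(1-u)$ yields the binomial-convolution identity
\[
\sum_{s=0}^{K}\binom{K}{s}D_s\,t^{\,K-s} = K!\sum_{i=0}^{K}\frac{(t-1)^i}{i!},
\]
valid for every $t$; at $t=1$ this is the familiar $\sum_s \binom{K}{s}D_s = K!$. I would record this as the single external ingredient.

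To prove $(\ast)$ I would reindex by $j = K-s$, turning its left side into $\sum_{j=0}^K (n-j)\binom{K}{j}D_{K-j}\,n^{j}$, and split this as $n\sum_j \binom{K}{j}D_{K-j}n^{j} - \sum_j j\binom{K}{j}D_{K-j}n^{j}$. The first piece equals $nK!\,S_K$ with $S_K := \sum_{i=0}^K (n-1)^i/i!$, directly from the convolution identity at $t=n$. For the second piece I would absorb the weight through $j\binom{K}{j} = K\binom{K-1}{j-1}$, shift the index, and apply the convolution identity at order $K-1$ to obtain $nK!\,S_{K-1}$. The two contributions then telescope, since $S_K$ and $S_{K-1}$ differ by the single term $(n-1)^K/K!$:
\[
nK!\,(S_K - S_{K-1}) = nK!\cdot \frac{(n-1)^K}{K!} = n(n-1)^K,
\]
which is exactly the right-hand side of $(\ast)$.

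The only real obstacle is the linear weight $r$ (equivalently $s+m$) in the sum. Without it, $(\ast)$ would degenerate to the $m=0$ statement $\sum_r (n/(n-1))^n\,\BP(N_n=r)\,D_r/r! = 1$, which is immediate from the convolution identity and merely says that the screaming-toes core-size probabilities are normalised. The weight is what forces the two-term split and the absorption step, and the fortunate accident is that the resulting partial sums $S_K$ and $S_{K-1}$ differ by precisely one term, so the telescoping collapses to the clean closed form. A more combinatorial route is available --- rewriting the left side of $(\ast)$ as $\sum_{\sigma\in S_K} n^{\mathrm{fix}(\sigma)}\bigl(n-\mathrm{fix}(\sigma)\bigr)$ and matching it bijectively with the $n(n-1)^K$ pairs $(a,g)$ where $a\in[n]$ and $g:[K]\to[n]\setminus\{a\}$ --- but the generating-function telescoping appears to be the most economical.
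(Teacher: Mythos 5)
Your proof is correct, but it takes a genuinely different route from the paper's. The paper never isolates a polynomial identity: it expands $D_{r-m}/(r-m)!$ by inclusion--exclusion as $\sum_j (-1)^j/j!$, interchanges the order of summation, and then collapses the inner sum $\sum_{r \geq m+j} \frac{r}{n}\frac{n_{[r]}}{n^r}$ exactly, using the tail identity $\BP(N_n \geq m+j) = n_{[m+j]}/n^{m+j}$ from (\ref{identity}); the binomial theorem then produces the factor $(1-1/n)^{n-m}$ and the result. In that argument the troublesome linear weight $r$ is never confronted directly --- it is absorbed into the core-size probabilities $\BP(N_n = r)$, which the tail identity sums in closed form. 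You instead keep the derangement numbers intact, strip the identity down to the polynomial statement $(\ast)$, and invoke the EGF convolution identity $\sum_{s}\binom{K}{s}D_s t^{K-s} = K!\sum_{i \leq K}(t-1)^i/i!$, handling the weight by the absorption $j\binom{K}{j} = K\binom{K-1}{j-1}$ and the telescoping $S_K - S_{K-1} = (n-1)^K/K!$. What the paper's route buys is economy in context: identity (\ref{identity}) is already established for the random-mapping core, so the proof is a few lines of rearrangement within the probabilistic framework. What your route buys is self-containment and generality: the convolution identity is a clean, reusable statement (a one-parameter family of identities, not just the point needed here), the reduction makes transparent that the lemma is pure combinatorics about derangements with no probabilistic content, and your bijective sketch of $(\ast)$ points to a fully combinatorial proof. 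It is worth noting the two arguments are cousins at bottom --- unpacking your EGF identity by writing $D_s = s!\sum_j(-1)^j/j!$ and swapping sums reproduces essentially the paper's computation --- but the organization, the key lemma, and the mechanism for the weight are all different. One small point of care: your telescoping step needs the convention $S_{-1}=0$ so that the edge case $K=0$ (i.e.\ $m=n$) is covered; it is, since the second piece of your split is then an empty sum.
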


\begin{proof}
We have
\begin{eqnarray*}
\sum_{r = m}^n \frac{r}{n} \frac{n_{[r]}}{n^r} \frac{D_{r-m}}{(r-m)!} & = & \sum_{r = m}^n \frac{r}{n} \frac{n_{[r]}}{n^r} \sum_{j = 0}^{r-m} \frac{(-1)^j}{j!}\\
& = & \sum_{j=0}^{n-m} \frac{(-1)^j}{j!} \, \sum_{r = m+j}^n \frac{r}{n} \frac{n_{[r]}}{n^r} \\
& = & \sum_{j=0}^{n-m} \frac{(-1)^j}{j!} \,\BP(N_n \geq m+j) \qquad \textrm{ from }(\ref{corenumber}) \\
& = & \sum_{j=0}^{n-m} \frac{(-1)^j}{j!} \,\frac{n_{[m+j]}}{n^{m+j}} \qquad \textrm{ from }(\ref{identity}) \\
& = & \frac{n_{[m]}}{n^m} \, \sum_{j=0}^{n-m} \frac{(-1)^j}{j!}\,\frac{(n-m)!}{(n-m-j)!}\frac{1}{n^j} \\
& = & \frac{n_{[m]}}{n^m} \,\sum_{j=0}^{n-m} \genfrac(){0pt}{}{n-m}{n-m-j} \left(-\frac{1}{n}\right)^j \\
& = & \frac{n_{[m]}}{n^m} \left(1 - \frac{1}{n}\right)^{n-m}
\end{eqnarray*}
It follows that the left side of (\ref{needtoshow}) is
$$
\left(\frac{n}{n-1}\right)^n \, \frac{n_{[m]}}{n^m} \left(\frac{n-1}{n}\right)^{n-m} = \frac{n_{[m]}}{(n-1)^m},
$$
which establishes Lemma~\ref{lem1}.
\end{proof}

\begin{lemma}\label{lem2}
For $k=0,1,\ldots,\lfloor n/2 \rfloor,$
\eq\label{c2n*law}
\BP(\tC_2^*(n) = k) = \left(\frac{1}{2}\right)^k \frac{1}{k!} \sum_{l = 0}^{\lfloor n/2\rfloor - k} (-1)^l  \left(\frac{1}{2}\right)^l \frac{1}{l!}\,\frac{n_{[2l+2k]}}{(n-1)^{2l + 2k}}.
\en
\end{lemma}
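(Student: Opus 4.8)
The plan is to recognize that the Screaming Toes mapping is nothing but the standard random mapping of Section~1 \emph{conditioned to have no fixed points}. Conditioning the i.i.d.\ variables $B_i$ of (\ref{Bdef}) on the event $\{B_i \neq i \text{ for all } i\}$ leaves each $B_i$ uniform on $[n]\setminus\{i\}$, which is exactly (\ref{Bdef1}); equivalently, since $f$ has a fixed point if and only if its core contains a $1$-cycle, the conditioning event is $\{C_1^*(n)=0\}$, where $C_1^*(n)$ counts $1$-cycles in the core of the standard mapping. (This is the \emph{no singleton cycles} condition, to be distinguished from the weaker \emph{no singleton components} discussed earlier.) A $2$-cycle in the core is unaffected by the conditioning, so
\[
\BP(\tC_2^*(n)=k)=\BP\bigl(C_2^*(n)=k \mid C_1^*(n)=0\bigr)=\frac{\BP(C_1^*(n)=0,\,C_2^*(n)=k)}{\BP(C_1^*(n)=0)} .
\]
The denominator is immediate, since there are $(n-1)^n$ fixed-point-free mappings out of $n^n$: thus $\BP(C_1^*(n)=0)=\bigl((n-1)/n\bigr)^n$.

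For the numerator I would condition on the core size. Because, given $N_n=r$, the core is a uniform random permutation of $r$ objects (the fact used to derive $\mcL(C_j^*(n))$ in Section~1.2), the joint law factors as
\[
\BP(C_1^*(n)=0,\,C_2^*(n)=k)=\sum_{r}\BP(N_n=r)\,\BP\bigl(C'_1(r)=0,\,C'_2(r)=k\bigr),
\]
where the permutation probability on the right is supplied by (\ref{prob0k}) with $n$ replaced by $r$. Substituting (\ref{prob0k}) together with $\BP(N_n=r)=\tfrac{r}{n}\tfrac{n_{[r]}}{n^r}$ from (\ref{corenumber}), and then interchanging the two finite sums, I would pull the factor $\bigl(\tfrac12\bigr)^k/k!$ to the front; the summand indexed by $l$ then carries the inner sum $\sum_{r}\tfrac{r}{n}\tfrac{n_{[r]}}{n^r}\tfrac{D_{r-(2l+2k)}}{(r-(2l+2k))!}$.

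This inner sum is precisely the left-hand side of Lemma~\ref{lem1} (up to the prefactor $(n/(n-1))^n$) with $m=2l+2k$, so (\ref{needtoshow}) collapses it to $\bigl((n-1)/n\bigr)^n\,n_{[2l+2k]}/(n-1)^{2l+2k}$. The resulting factor $\bigl((n-1)/n\bigr)^n$ cancels exactly against the denominator $\BP(C_1^*(n)=0)$ computed above, leaving precisely (\ref{c2n*law}); indeed the prefactor $(n/(n-1))^n$ built into Lemma~\ref{lem1} is exactly the normalization needed to account for the conditioning. The only point requiring care is the range of summation: terms with $2l+2k>n$ must be dropped, which happens automatically since $n_{[m]}=0$ for $m>n$, so the outer sum truncates at $l=\lfloor n/2\rfloor-k$ as stated. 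I expect the main (and only modest) obstacle to be the bookkeeping in the interchange of summation---matching the index $m=2l+2k$ of Lemma~\ref{lem1} to the derangement term $D_{r-2l-2k}$ emerging from (\ref{prob0k})---rather than any genuine difficulty, since Lemma~\ref{lem1} has been tailored to absorb exactly this sum.
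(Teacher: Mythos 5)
Your proof is correct and follows essentially the same route as the paper's: condition on the core size $N_n$, apply (\ref{prob0k}) with $n$ replaced by $r$, interchange the two finite sums, and collapse the inner sum over $r$ with Lemma~\ref{lem1} at $m=2l+2k$. The only cosmetic difference is that you carry the conditioning probability $\BP(C_1^*(n)=0)=\bigl((n-1)/n\bigr)^n$ as an explicit denominator and cancel it at the end, whereas the paper absorbs that normalization upfront into the law (\ref{toescore}) of $\tN_n$.
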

\begin{proof}
 It follows by conditioning that the number $\tN_n$ in the core of the mapping with no singleton cycles has distribution
\begin{eqnarray}
\BP(\tN_n = r)&  = & \BP(N_n = r) \frac{D_r}{r!} \big/ \left( \frac{n-1}{n}\right)^n
\nonumber \\
& = & \left(\frac{n}{n-1}\right)^n \frac{r}{n} \frac{n_{[r]}}{n^r} \frac{D_r}{r!}, r = 2, 3, \ldots,n. \label{toescore}
\end{eqnarray}
The law of the number of 2-cycles in the core is therefore given by
$$
\BP(\tC_2^*(n) = k) =\sum_{r=2k}^n \BP(\tN_n = r) 
 \times \bbbp(\textrm{random derangement of size }r\textrm{ has }k \textrm{ 2-cycles}),
$$
and the latter probability can be found via (\ref{prob0k}) with $n$ there replaced by $r$. We obtain, after some simplification,
\begin{eqnarray*}
\BP(\tC_2^*(n) = k) & = & \left(\frac{n}{n-1}\right)^n  \frac{2^{-k}}{k!}\, \sum_{r=2 k}^n \frac{r}{n} \frac{n_{[r]}}{n^r} 
\,  \sum_{l = 0}^{\lfloor r/2 \rfloor - k} (-1)^{l} \left(\frac{1}{2}\right)^{l}\frac{1}{l!}\, \frac{D_{r - 2 l -2k}}{(r - 2 l-2k)!}  \\
& = &  \frac{2^{-k}}{k!}\,\sum_{l = 0}^{\lfloor n/2 \rfloor - k} (-1)^{l} \left(\frac{1}{2}\right)^{l}\frac{1}{l!} \left(\frac{n}{n-1}\right)^n \sum_{r=2l+2k}^n \frac{r}{n} \frac{n_{[r]}}{n^r} \frac{D_{r - 2 l -2k}}{(r - 2 l-2k)!}.
\end{eqnarray*}
The term on the last line reduces to $n_{[2l+2k]}/(n-1)^{2l + 2k}$ by
using the identity in (\ref{needtoshow}), and completing the proof.
\end{proof}
A numerical example is given in Table~\ref{tab3}.

\subsubsection{Expected number of cycles of length $j$}
It is well known that the expected number of cycles of length $j$ in a standard random mapping core is
\eq\label{rmecjn*}
\bbbe C_j^*(n) = \sum_{r = j}^n \frac{r}{n} \frac{n_{[r]}}{n^r} \frac{1}{j} = \frac{1}{j} \BP(N_n \geq j) = \frac{1}{j} \frac{n_{[j]}}{n^j}, \quad j=1,\ldots,n; 
\en
see (\ref{identity}) for the last step. For the screaming toes mapping, the expected number of cycles of length $j$ is, from (\ref{ecjn*}) and (\ref{toescore}), 
\begin{eqnarray} 
\BE \tC_j^*(n) & = & \left(\frac{n}{n-1}\right)^n\, \sum_{r  = j}^n \frac{r}{n} \frac{n_{[r]}}{n^r} \frac{D_r}{r!}\, \frac{1}{j} \frac{r!}{D_r}\frac{D_{r-j}}{(r-j)!} \nonumber \\
& = & \left(\frac{n}{n-1}\right)^n\, \frac{1}{j} \sum_{r = j}^n \frac{r}{n} \frac{n_{[r]}}{n^r} \frac{D_{r-j}}{(r-j)!} \nonumber \\
& = & \frac{1}{j} \frac{n_{[j]}}{(n-1)^j}, \label{ecjn*toes}
\end{eqnarray}
the final equality coming from (\ref{needtoshow}). Some numerical values are given in Table~\ref{tab1a}.

\begin{remark}
Since the number of components is equal to the number of cycles in the core, $\bbbe \tK_n$ may also be computed from (\ref{ecjn*toes}), to obtain
\eq\label{ekn2}
\bbbe \tK_n = \sum_{j=2}^n \frac{1}{j} \frac{n_{[j]}}{(n-1)^j},
\en
which should be compared to (\ref{ekn}). The equivalence of (\ref{ekn}) and (\ref{ekn2}) is illustrated for the case of $n = 10$ in Tables~\ref{tab2} and \ref{tab1a}.
\end{remark}

\subsection{Did anyone scream?}

Cameron's original problem was to show that the  probability that someone screams~is
\begin{equation}\label{pcresult}
q_n := \sum_{l = 1} ^ {\lfloor n/2 \rfloor} \frac{(-1)^{l-1} n_{[2l]}}{2^l l! (n-1)^{2l}},
\end{equation}
 and to find the limiting behavior of $q_n$ as $n \to \infty$. 
We can identify $q_n$ because $\BP({\rm someone\ screams}) =  1 - \BP(\tC_2^*(n) = 0),$ 
so from Lemma~\ref{lem2},  
$$
q_n = \BP(\tC_2^*(n) > 0) =  \sum_{l = 0}^{\lfloor n/2\rfloor} (-1)^{l-1} \left(\frac{1}{2}\right)^l \frac{1}{l!} \,\frac{n_{[2l]}}{(n-1)^{2l}},
$$
recovering (\ref{pcresult}). Representative values of $q_n$ are given in Table \ref{tab1}.

\begin{table}[htbp]
\begin{center}
\begin{tabular}{|c|c||c | c |}
\hline
$n$ & $q_n$ & $n$ & $q_n$ \\
\hline
5 & 0.5664 & 60 & 0.4039 \\
10 & 0.4654 & 70 & 0.4023 \\
15 & 0.4386 & 80 & 0.4012 \\
20 & 0.4264 & 90 & 0.4003\\
30 & 0.4148 & 100 & 0.3996 \\
40 & 0.4093 & 1,000 & 0.3941\\
50 & 0.4060 & 10,000 & 0.3935\\
\hline
\end{tabular}
\caption{The probability $q_n$ from (\ref{pcresult}) of at least one screaming pair for various values  of $n$.}\label{tab1}
\end{center}
\end{table}

Finally, a word about the limiting value of $q_n$. It is straightforward to show that the joint law of $(\tC^*_2(n),\tC^*_3(n),\ldots)$ converges to that of independent Poisson random variables, the $j$th of which has mean $1/j$, just as in the standard mapping case.
In particular, 
$$
\lim_{n \to \infty} q_n = 1 - \bbbp({\rm Po}(1/2) = 0) = 1 - e^{-1/2} \approx 0.3935.
$$

\ignore{
To establish the asymptotics of (\ref{pcresult}), we can use () and (\ref{toescore}) to see that for any fixed $b \leq n$, and $r_2, r_3, \ldots, r_b \geq 0$ satisfying $2 r_2+\cdots + br_b = m \leq n$,
\eq\label{mapcoremeans}
\mathbb{E}(C^*_2(n)^{[r_2]} \cdots C^*_b(n)^{[r_b]}) = \prod_{j=2}^b \left(\frac{1}{j}\right)^{r_j}\, \frac{n_{(m)}}{(n-1)^m},
\en
so that $C_2^*(n),\ldots,C_b^*(n))$ converges in distribution to independent Poisson components $(Z_2,\ldots,Z_n)$ where $\BE Z_i = 1/i$. (This result also follows directly from () $N_n^* \to \infty$ in probability.) Hence $q_n = \BP(C_2^*(n) > 0) \to 1 - e^{-1/2}$, as claimed in (\ref{qnlim}).
}

\section{Simulating the component counts}

It is often useful to be able to simulate combinatorial objects, for example to study the distributions of cycle lengths and component sizes for moderate values of $n$, where the asymptotics might not be good, or when asking more detailed questions where explicit answers are hard to come by. In our setting, there are (at least) two approaches to this. 

\subsection{A rejection method}\label{rejmethod}

The first is useful for studying the component counting process $(\tC_2(n),\tC_2(n),\ldots)$, by exploiting a modification of the simulation approach in~\cite[Section 4]{arratia_simulating_2018}. For any $\theta > 0$, (\ref{jointlaw}) gives the distribution of $(\tC_2(n),\ldots,\tC_n(n))$  as
\begin{eqnarray*}
\bbbp(\tC_j(n) = a_j, j=2,\ldots,n) & \propto & \bbbone\left\{ \sum_{j=2}^n j a_j = n\right\}\, \prod_{j=2}^n \left(\frac{\omega_j}{j}\right) ^{a_j} \frac{1}{a_j!} \nonumber \\
& = & \bbbone\left\{ \sum_{j=2}^n j a_j = n\right\}\, \prod_{j=2}^n \left(
\frac{\omega_j}{\theta}\right)^{a_j}\, \prod_{j=2}^n  \left(
\frac{\theta}{j}\right)^{a_j} \frac{1}{a_j!} \nonumber \\
& = &  \prod_{j=2}^n \left(
\frac{\omega_j}{\theta}\right)^{a_j}\, \bbbone\left\{ \sum_{j=2}^n j a_j = n\right\}\,\prod_{j=2}^n  \left(
\frac{\theta}{j}\right)^{a_j} \frac{1}{a_j!},
\end{eqnarray*}
where $\omega_j = j \tlam_j = \bbbp({\rm Po}(j) < j-1)$ is given by (\ref{lambdajhat}). The last factorization shows that we can simulate $(a_2,\ldots,a_n)$ from the distribution
\eq\label{esfcondlaw}
\BP_\theta(a_2,\ldots,a_n) \propto \bbbone\left\{ \sum_{j=2}^n j a_j = n\right\}\,\prod_{j=2}^n  \left(
\frac{\theta}{j}\right)^{a_j} \frac{1}{a_j!}
\en
and, assuming $\theta$ can be chosen to make $\omega_j \leq \theta$ for $j = 2, 3, \ldots,n$, 
we accept $(a_2,a_3,\ldots,a_n)$ as an observation from the required distribution with probability
$$
h(a_2,\ldots, a_n) = \prod_{j=2}^n \left( \frac{\omega_j}{\theta} \right)^{a_j}.
$$
The method relies on efficient simulation from (\ref{esfcondlaw}), which is precisely that of the counts $(C_1(n),\ldots,C_n(n))$ with the Ewens Sampling Formula with parameter $\theta$, conditional on $C_1(n) = 0$. For more information on this law in the context of $\theta$-biased derangements, see ~\cite{dasilva2020}.

From (\ref{lambdajhat}) we note that we may take $\theta = 1/2$. We implement the algorithm in a slightly different way, by simulating $(a_1,\ldots,a_n)$ from the regular Ewens Sampling Formula with parameter $\theta = 1/2$, and accepting $(a_2,\ldots,a_n)$ with probability 
\eq\label{accrate}
h(a_1, a_2,\ldots, a_n) = \bbbone(a_1 = 0) \prod_{j=2}^n \left( \frac{\omega_j}{\theta} \right)^{a_j}.
\en

There are many ways to generate observations from the Ewens Sampling Formula with an arbitrary parameter $\theta$, for example by using the Chinese Restaurant Process or the Feller Coupling; we exploit the latter, and point the reader to the discussion in \cite{arratia_simulating_2018} about the efficiency of these methods. 

\subsection{Estimating the acceptance probability}

To compute the asymptotic acceptance probability, we note that 
$$
\mathbb{P}(\mbox{accept an observation}) = \mathbb{E} \left( \bbbone(C_1(n) = 0)\,  \prod_{j=2}^n ( 2 \omega_j)^{C_j(n)} \right),
$$
where  $(C_1(n),\ldots,C_n(n))$ has the Ewens Sampling Formula with parameter $\theta = 1/2$. The Poisson limit heuristic shows that this is asymptotically
\begin{eqnarray}
e^{-1/2}\,\prod_{j=2}^\infty \mathbb{E} (2\omega_j)^{Z_j} 
& = &  e^{-1/2}\, \prod_{j=2}^\infty \exp\left( - \frac{1}{2j} \left( 1 - 2\omega_j \right) \right) \nonumber \\
& = & e^{- 1/2}\, \exp \left( - \sum_{j = 2}^\infty \frac{1}{j} \left(\frac{1}{2} -  \bbbp({\rm Po}(j) < j-1)\right)\right)  \nonumber \\
& = & \frac{1}{e} \,\frac{1}{\sqrt{2}}, \label{accrate1}
\end{eqnarray}
the last result following because the algorithm is effectively generating a standard random mapping, and accepting that mapping if its core is a derangement; from (\ref{toescore}), this has asymptotically probability $1/e$. In $10^6$ simulations of the case $n = 10$ the acceptance rate was estimated to be 0.247, in reasonable agreement with the limiting value of $\approx 0.260$ from (\ref{accrate}).

\begin{remark}
There is an appealing connection between the exponent in the right-hand term in (\ref{accrate1}),
\eq\label{rhterm}
\sum_{j = 2}^\infty \frac{1}{j} \left(\frac{1}{2} -  \bbbp({\rm Po}(j) < j-1)\right)
\en
and Spitzer's Theorem~\cite{fs1956}, which is described in detail in \cite[Theorem 1, p.612]{feller1970}. This may be used to evaluate the corresponding value for a \emph{standard} mapping, 
$$
\sum_{j = 1}^\infty \frac{1}{j} \left(\frac{1}{2} -  \bbbp({\rm Po}(j) < j)\right) = \frac{1}{2} \log 2,
$$
as given for example in \cite[Eqn. (19)]{arratia_simulating_2018} and \cite{dep91}. 
It follows that (\ref{rhterm}) is 
\begin{eqnarray}
\lefteqn{
\frac{1}{2} \log 2 -\left(\frac{1}{2} - \frac{1}{e}\right) + \sum_{j=2}^\infty \frac{1}{j} \bbbp({\rm Po}(j)=j-1) }&& \nonumber \\
&= &  \frac{1}{2} \log 2 -\left(\frac{1}{2} - \frac{1}{e}\right) + 1 - \frac{1}{e} = \frac{1}{2} (1 + \log 2), \label{jfck}
\end{eqnarray}
the sum being the probability that a Borel distribution with parameter 1 is at least 2.
This provides the formal justification of (\ref{accrate1}).
\end{remark}

\subsection{Simulating component and core sizes}\label{jointmethod}

A rejection method can be used to study details of the cycle sizes in the core of a  mapping, by generating an observation $r$ from the distribution of $\tN_n$ in (\ref{toescore}), and then generating a random derangement of size $r$. While we do not illustrate this approach here, see~\cite{dasilva2020} for efficient methods for generating $\theta$-biased derangements. 

The second approach simulates a random mapping with no singleton cycles in its core, as determined by the random variables in (\ref{Bdef1}), and processes the output using (for example) the {\sf R} igraph package \cite{csardi2006} to compute the component and core sizes. This provides a computationally cheap way to check the first approach, and provides a way to study aspects of the joint law of component and cycle sizes. 

\subsection{Examples}
Here we illustrate some of the explicit results obtained above, and their corresponding simulated values, all in the setting of $n = 10$. Table~\ref{tab2} compares the mean number of components for the screaming toes mapping with the corresponding values for the regular mapping. The mean number of components is 1.251 for the screaming toes mapping, and 1.913 for the standard mapping.

\begin{table}[hbtp]
\begin{center}
\begin{tabular}{|c|cc||c|}
\hline
  & $\bbbe \tC_j(10)$  & Simulation &  $\bbbe C_j(10)$\\
$j$ & (\ref{cjmeans}) & &  (\ref{hsfmeans}) \\
\hline
  1 &         &        & 0.3874 \\
  2 &  0.0744 & 0.0745 & 0.2265 \\
  3 &  0.0771 & 0.0764 & 0.1680 \\
  4 &  0.0734 & 0.0734 & 0.1391 \\
  5 &  0.0699 & 0.0699 & 0.1235 \\
  6 &  0.0673 & 0.0676 & 0.1160 \\
  7 &  0.0654 & 0.0650 & 0.1150 \\
  8 &  0.0608 & 0.0607 & 0.1225 \\
  9 &  0.0000 & 0.0000 & 0.1489 \\
 10 &  0.7629 & 0.7633 & 0.3660 \\
 \hline
\end{tabular}
\caption{Mean number of components of sizes 1(1)10 for $n = 10$ for the screaming toes mapping, simulated values from $10^6$  realizations using the method in Section~\ref{rejmethod}, and the corresponding means for a standard mapping. }\label{tab2}
\end{center}
\end{table}

Table~\ref{tab3} illustrates the distribution of the number of screaming pairs, while Table~\ref{tab1a} compares the mean cycle counts for the screaming toes core, and the corresponding values for the standard core.
The mean number of cycles is 1.251 for the screaming toes mapping, and 1.913 for the standard mapping, the former in agreement with the results in (\ref{ekn}) and (\ref{ekn2}). Table~\ref{tab5} illustrates the distribution of the number of elements in the core.

\begin{table}[htbp]
\begin{center}
\begin{tabular}{|c|c|c|}
\hline
 & $\BP(\tC_2^*(10) = k)$ & Simulation \\
$k$ &  (\ref{c2n*law}) & \\
\hline
  0 &  0.5346 &  0.5352 \\
  1 &  0.3809 &  0.3800 \\
  2 &  0.0789 &  0.0791 \\
  3 &  0.0055 &  0.0056 \\
  4 &  0.0001 &  0.0001 \\
  5 &  0.0000 &  0.0000 \\
 \hline
\end{tabular}
\caption{Distribution of the number of screaming pairs, from (\ref{c2n*law}), for $n = 10$. Simulated values from $10^6$  realizations of the method in Section~\ref{jointmethod}.}\label{tab3}
\end{center}
\end{table}

\begin{table}[htbp]
\begin{center}
\begin{tabular}{|c|cc||c|}
\hline
 & $\bbbe \tC_j^*(10)$  & Simulation & $\bbbe C_j^*(10)$\\
 $j$ &(\ref{ecjn*toes}) & &  (\ref{rmecjn*})\\
\hline
  1 &         &        &  1.0000\\
  2 &  0.5555 & 0.5555 &  0.4500\\
  3 &  0.3292 & 0.3292 &  0.2400\\
  4 &  0.1923 & 0.1920 &  0.1260\\
  5 &  0.1029 & 0.1024 &  0.0605\\
  6 &  0.0472 & 0.0474 &  0.0252\\
  7 &  0.0182 & 0.0181 &  0.0086\\
  8 &  0.0054 & 0.0053 &  0.0023\\
  9 &  0.0010 & 0.0010 &  0.0004\\
 10 &  0.0001 & 0.0001 &  0.0000\\
 \hline
\end{tabular}
\caption{Mean number of cycles of sizes 1(1)10 in the core for $n = 10$ for the screaming toes mapping, simulated values from $10^6$  realizations of the method in Section~\ref{jointmethod}, and the corresponding means for a typical random mapping. }\label{tab1a}
\end{center}
\end{table}

\begin{table}[htbp]
\begin{center}
\begin{tabular}{|c|cc||c|}
\hline
 & $\bbbp(\tN_{10} = r)$  & Simulation & $\bbbp(N_{10} = r)$\\
 $r$ &(\ref{toescore}) & &  (\ref{corenumber})\\
\hline
  1 &         &        &  1.0000\\
  2 &  0.2581 & 0.257 &  0.1000\\
  3 &  0.2065 & 0.206 &  0.1800\\
  4 &  0.2168 & 0.217 &  0.2016\\
  5 &  0.1590 & 0.159 &  0.1512\\
  6 &  0.0958 & 0.096 &  0.0907\\
  7 &  0.0447 & 0.045 &  0.0423\\
  8 &  0.0153 & 0.015 &  0.0145\\
  9 &  0.0034 & 0.003 &  0.0033\\
 10 &  0.0004 & 0.0003 &  0.0004\\
 \hline
\end{tabular}
\caption{Probability distribution of the number of elements in the core for $n = 10$ for the screaming toes mapping, simulated values from $10^6$  realizations of the method in Section~\ref{jointmethod}, and the corresponding probabilities for a typical random mapping. }\label{tab5}
\end{center}
\end{table}


As a final example, we estimate, from  $10^6$ realizations of the simulation method in Section~\ref{jointmethod}, the probability that the screaming toes mapping with $n  10$ has no repeated component sizes  to be 0.959, no repeated cycle sizes to be 0.898, and no repeated component or cycle sizes to be 0.879. 

\section{Discussion}
This paper has focused on the small components and cycles of the screaming toes mapping because that is where the main difference with the standard case emerge. We noted before
(\ref{lambdajhat}) that the screaming toes mapping is logarithmic, in that the Poisson random variables in (\ref{t1ndef}) satisfy
$$
i \bbbp(Z_i = 1) \to 1/2, \textrm{ and } i \bbbe Z_i \to 1/2 \textrm{ as } i \to \infty,
$$
this following from (\ref{lambdajhat}). As a consequence (\cite[Chapter 6]{arratia_logarithmic_2003}) the largest component sizes, when scaled by $n$, have asymptotically the Poisson-Dirichlet law with parameter $\theta = 1/2$, just as in a standard mapping. In a similar vein, the largest cycle lengths, when scaled by $\tN_n$, have asymptotically the Poisson-Dirichlet distribution with parameter $\theta = 1$, once more just as for the standard mapping core.

\ignore{
\begin{itemize}
\item I have tried to motivate the approach through its inherently probabilistic nature, rather than by ``counting" methods. It's a choice!
\item Could add central limit results, e.g. for number of components, if add things about total variation estimates to the independent Poisson limits. Probably a coupling proof of this with a regular r.m.?
\item It would be worth a look at joint laws of the component size counts and the cycle counts. Perhaps worth a separate look, even in the standard case?
\item It would be simple to add things about big components and the Poisson-Dirichlet laws.
\end{itemize}
}

\bibliography{amsrefs1}

\begin{thebibliography}{10}

\bibitem{arratia_simulating_2018}
{\sc Arratia, R., Barbour, A., Ewens, W. and Tavar{\'e}, S.} (2018).
\newblock Simulating the component counts of combinatorial structures.
\newblock {\em Theoretical Population Biology\/} {\bf 122,} 5--11.

\bibitem{arratia_logarithmic_2003}
{\sc Arratia, R., Barbour, A. and Tavar{\'e}, S.} (2003).
\newblock {\em Logarithmic combinatorial structures: a probabilistic approach}.
\newblock European Mathematical Society Publishing House, Zuerich, Switzerland.

\bibitem{bb85}
{\sc Bollob\'as, B.} (1985).
\newblock {\em Random Graphs}.
\newblock Academic Press, New York.

\bibitem{cameron_notes_2017}
{\sc Cameron, P.~J.} (2017).
\newblock {\em Notes on {Counting}: {An} {Introduction} to {Enumerative}
  {Combinatorics}}.
\newblock Cambridge University Press, Cambridge.

\bibitem{csardi2006}
{\sc Csardi, G. and Nepusz, T.} (2006).
\newblock The igraph software package for complex network research.
\newblock {\em InterJournal Complex Systems\/} 1695.

\bibitem{dasilva2020}
{\sc {da Silva}, P.~H., Jamshidpey, A. and Tavar\'e, S.} (2020).
\newblock Random derangements and the {Ewens Sampling Formula}.
\newblock Submitted.

\bibitem{dep91}
{\sc Donnelly, P., Ewens, W.~J. and Padmadisastra, S.} (1991).
\newblock Random functions: exact and asymptotic results.
\newblock {\em Adv. Appl. Prob.\/} {\bf 23,} 437--455.

\bibitem{feller1970}
{\sc Feller, W.} (1970).
\newblock {\em An introduction to probability theory and its applications}
  2nd~ed. vol.~II.
\newblock John Wiley \& Sons, Inc., New York.

\bibitem{bh60}
{\sc Harris, B.} (1960).
\newblock Probability distributions related to random mappings.
\newblock {\em Ann. Math. Stat.\/} {\bf 31,} 1045--1062.

\bibitem{kolchin1976}
{\sc Kolchin, V.~F.} (1976).
\newblock A problem of allocation of particles in cells and random mappings.
\newblock {\em Theory Probab. its Appl.\/} {\bf 21,} 48--63.

\bibitem{fs1956}
{\sc Spitzer, F.} (1956).
\newblock A combinatorial lemma and its application to probability theory.
\newblock {\em Trans Amer Math Soc\/} {\bf 82,} 323--339.

\end{thebibliography}
\bibliographystyle{APT}

\noindent {\bf Acknowledgements: } I thank John Kingman for comments that led to the evaluation in (\ref{jfck}). 
{\sf R} code for performing the computations described in the paper may be obtained from the author.

\end{document}